\documentclass[11pt]{article}
\usepackage{amsthm, amsmath, amssymb, amsfonts, url, booktabs, tikz, setspace, fancyhdr, bm, mathrsfs}
\usepackage{hyperref}
\usepackage{geometry}
\usepackage{hyperref, enumerate}
\usepackage[shortlabels]{enumitem}
\usepackage[babel]{microtype}
\usepackage[english]{babel}
\usepackage[capitalise]{cleveref}
\usepackage{comment}
\usepackage{bbm}
\usepackage{csquotes}
\usepackage{mathabx}
\usepackage{tikz}
\usepackage{graphicx}
\usepackage{float}
\usepackage{amsmath}

\counterwithin{figure}{section}

\newtheorem{theorem}{Theorem}[section]
\newtheorem{prop}[theorem]{Proposition}

\newtheorem{conj}[theorem]{Conjecture}
\newtheorem{claim}[theorem]{Claim}

\newtheorem{fact}[theorem]{Fact}

\theoremstyle{definition}

\newtheorem*{defn-non}{Definition}

\newtheorem{ques}[theorem]{Question}
\usepackage[linesnumbered, ruled]{algorithm2e}
\SetKwRepeat{Do}{do}{while}%

\newenvironment{poc}{\begin{proof}[Proof of the claim]}{\end{proof}}

\usepackage{todonotes}

\newcommand{\cA}{\mathcal{A}}

\newcommand{\cB}{\mathcal{B}}

\newcommand{\cF}{\mathcal{F}}
\newcommand{\cG}{\mathcal{G}}
\newcommand{\cH}{\mathcal{H}}

\newcommand{\VC}{\operatorname{VC}}
\newcommand{\Tr}{\operatorname{Tr}}
\newcommand{\AK}{\operatorname{AK}}

\title{Beating the Ahlswede--Khachatrian bound for the Erd\H{o}s--Frankl--Pach problem}
\author{
Tuan Tran\thanks{School of Mathematical Sciences, University of Science and Technology of China, Hefei, China. Supported by the Excellent
Young Talents Program (Overseas) of the National Natural Science Foundation of China under Grant No.
GG0010007003. Email: trantuan@ustc.edu.cn.}
\and
Zixiang Xu\thanks{School of Mathematical Sciences, Zhejiang University, Hangzhou, China. Email: zixiangxu@zju.edu.cn.}
}
\date{}

\begin{document}
\maketitle

\begin{abstract}
In the 1980s, Erd\H{o}s and, independently, Frankl and Pach conjectured that, for sufficiently large \(n\), every \((d+1)\)-uniform family on \(\{1,\ldots,n\}\) with VC-dimension \(d\) has size at most \(\binom{n-1}{d}\), the size of a star. 
Ahlswede and Khachatrian disproved this conjecture in 1997 by giving a family of size $\binom{n-1}{d}+\binom{n-4}{d-2}$.
This value has since been widely believed to be best possible, and Mubayi and Zhao explicitly conjectured its optimality in 2007.
Very recently, Wang, Xu and Zhang proved their conjecture for
\(d=2\) and \(n\ge 7\), providing further support for this belief.

Surprisingly, we show that the Mubayi-Zhao conjecture is false for every \(d\ge 3\) by constructing families larger than the Ahlswede--Khachatrian bound.
Our constructions suggest that the answer to the Erd\H{o}s--Frankl--Pach problem depends delicately on both \(n\) and \(d\).
\end{abstract}

\section{Introduction}

Let \(\cF\subseteq 2^X\) be a set system.  A set \(S\subseteq X\) is
\emph{shattered} by \(\cF\) if \(\{F\cap S:F\in\cF\}=2^S\). The
\emph{VC-dimension} of \(\cF\), denoted by \(\VC(\cF)\), is the largest
size of a 
set shattered by \(\cF\).  The classical Sauer--Shelah
theorem~\cite{1972JCTASauer,1972PACJMShelah,1971TPAVCVC} states that if
\(\cF\subseteq 2^{[n]}\) and \(\VC(\cF)\le d\), then
\(|\cF|\le \sum_{i=0}^d\binom{n}{i}\), with equality attained by the Hamming
ball of radius \(d\).  

Over 40 years ago, Erd\H{o}s \cite{1984Erdos} and, independently, Frankl and Pach \cite{1984Franklpach} initiated the study of the uniform analogue of the Sauer--Shelah theorem. The problem has long attracted interest \cite{1997CombFan,2007JAC}, but substantial progress has come only in recent years \cite{2025CombProof,2024FranklPach,2025ThreeUniform,2025YangYu}.
For \(d\ge 0\), let \(M_d(n)\) denote the maximum size of a family
\(\cF\subseteq\binom{[n]}{d+1}\) with \(\VC(\cF)\le d\).
It is not hard to see that \(M_0(n)=1\) and \(M_1(n)=n-1\). For $d\ge 2$, however, much less is known.


\begin{ques}[Erd\H{o}s--Frankl--Pach]\label{question:EFP}
For given integers \(d\ge 2\) and \(n\ge 2d+2\), determine \(M_d(n)\).
\end{ques}

This problem was originally motivated by a possible
generalization of the Erd\H{o}s--Ko--Rado theorem.  Indeed, the canonical star
\(\{F\in\binom{[n]}{d+1}:1\in F\}\) has size \(\binom{n-1}{d}\) and
VC-dimension at most \(d\).  Erd\H{o}s~\cite{1984Erdos}, as well  Frankl and
Pach~\cite{1984Franklpach}, conjectured that this 
construction is optimal whenever
\(n\) is sufficiently large compared to \(d\).  Frankl and Pach~\cite{1984Franklpach} also proved
the elegant upper bound \(M_d(n)\le \binom{n}{d}\), 
which remains a
fundamental benchmark. Later improvements include the \(\Omega_d(\log n)\)
saving of Mubayi and Zhao~\cite{2007JAC} for prime power \(d\) and sufficiently large \(n\), 
and the result of Ge, Xu, Yip, Zhang and Zhao~\cite{2024FranklPach} that the Frankl--Pach bound is never attained. 
More recently, Chao, Xu, Yip and Zhang~\cite{2025CombProof} gave a purely combinatorial proof that \(M_d(n)\le \binom{n-1}{d} +O_d(n^{d-1-\frac1{4d-2}})\), which Yang and Yu~\cite{2025YangYu} subsequently sharpened to \(M_d(n)\le \binom{n-1}{d}+O_d(n^{d-2})\).

On the lower bound side, the star conjecture was disproved by Ahlswede and
Khachatrian~\cite{1997CombFan}, who constructed a \((d+1)\)-uniform family
with VC-dimension at most \(d\) and size
\(\AK_d(n):=\binom{n-1}{d}+\binom{n-4}{d-2}\).  
Mubayi and Zhao~\cite{2007JAC} later found infinitely many non-isomorphic constructions of the same size and conjectured that $M_d(n)=\AK_d(n)$ for all sufficiently large \(n\). 
For convenience, we recall their construction. Let
\(V=[n]\setminus\{1,2\}\), and define
\[
        \cG_1=
        \binom{V}{d}\setminus
        \bigg\{G\in\binom{V}{d}:4\in G,\ 3\notin G\bigg\}\ \textup{and}\  \cG_2=\left\{G\in\binom{V}{d}:4\in G\right\}.
\]
Set
\[
        \cA=
        \bigg\{F\in\binom{[n]}{d+1}:\{1,2\}\subseteq F\bigg\}
        \cup\big\{\{1\}\cup G:G\in\cG_1\big\}
        \cup\big\{\{2\}\cup G:G\in\cG_2\big\}.
\]
Ahlswede and Khachatrian~\cite{1997CombFan} proved that \(\VC(\cA)\le d\), and 
a direct calculation gives
\[
        |\cA|
        =\binom{n-1}{d}+\binom{n-4}{d-2}=\AK_d(n).
\]
The case \(d=2\) provides strong evidence 
for this 
conjecture: Wang, Xu and Zhang~\cite{2025ThreeUniform} proved that
\(M_2(n)=\AK_2(n)\) for all \(n\ge 7\).  
Thus, for 3-uniform families with VC-dimension at most \(2\), the Ahlswede--Khachatrian bound is optimal. 


Our main result disproves the Mubayi--Zhao conjecture for every \(d\ge 3\).

\begin{theorem}\label{thm:main}
The following statements hold.
\begin{itemize}
\item[\rm (i)] For $n\ge 6$, 
\[
M_3(n)\ge \binom{n-1}{3}+n-3.
\]
\item[\rm (ii)] For every $d\ge 4$ and $n\ge 2d$,
\[
        M_d(n)\ge \binom{n-1}{d}+\binom{n-4}{d-2}+\binom{n-6}{d-3}+2M_{d-4}(n-6).
\]
\end{itemize}
\end{theorem}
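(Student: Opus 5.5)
The plan is to stay inside the Ahlswede--Kha\-cha\-trian/Mubayi--Zhao framework and reduce the problem to a cleaner extremal question about two $d$-uniform families on $V=[n]\setminus\{1,2\}$. For $\cG_1,\cG_2\subseteq\binom{V}{d}$, consider
\[
\cF(\cG_1,\cG_2)=\bigg\{F\in\binom{[n]}{d+1}:\{1,2\}\subseteq F\bigg\}\cup\{\{1\}\cup G:G\in\cG_1\}\cup\{\{2\}\cup G:G\in\cG_2\}.
\]
Its size is $\binom{n-2}{d-1}+|\cG_1\cup\cG_2|+|\cG_1\cap\cG_2|$. Taking $\cG_1\cup\cG_2=\binom{V}{d}$ gives $\binom{n-1}{d}+|\cG_1\cap\cG_2|$. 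In the family $\cA$ the intersection has size $\binom{n-4}{d-2}$, so beating $\AK_d(n)$ amounts to finding a larger intersection compatible with VC-dimension $d$.

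\emph{Step 1 (shattering criterion).} A $(d+1)$-set $S$ can be shattered only if $S\in\cF$, since no other member can have trace $S$. We treat the three types of $S$ separately.
\begin{itemize}
\item If $\{1,2\}\subseteq S$, the trace $\emptyset$ would need a member avoiding both $1$ and $2$. There is none, so such $S$ is never shattered.
\item If $S=\{1\}\cup G$ with $G\in\cG_1$, the traces $\{1\}\cup A$ with $A\subsetneq G$ come for free from the sets containing $\{1,2\}$, using $n\ge 2d$. The traces $A\subseteq G$ must come from sets $\{2\}\cup G'$ with $G'\in\cG_2$ and $G'\cap G=A$. For $A=G$ this forces $G\in\cG_2$.
\item The case $S=\{2\}\cup G$ is symmetric.
\end{itemize}
Hence $\VC(\cF)\le d$ if and only if no $G\in I:=\cG_1\cap\cG_2$ satisfies $\{G'\cap G:G'\in\cG_2\}=2^G$ or $\{G'\cap G:G'\in\cG_1\}=2^G$. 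I will verify this criterion carefully, including the edge cases with small $n$.

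\emph{Step 2 (the reduced problem).} We must choose $\cG_1\cup\cG_2=\binom{V}{d}$ and a large set $I$ such that, for every $G\in I$, each $\cG_i$ misses some trace on $G$. The natural way to arrange this is to fix a few special elements and to let one family miss all sets of a certain pattern. For example, in $\cA$ the family $\cG_1$ misses every $d$-set that contains $4$ but not $3$, and $I$ consists of the sets containing both $3$ and $4$. I would enlarge $I$ using the special elements $3,4,5,6$.
\begin{itemize}
\item Keep the AK part: $d$-sets containing $\{3,4\}$, which contributes $\binom{n-4}{d-2}$.
\item Add the sets containing $\{3,5,6\}$ but not $4$, which contributes $\binom{n-6}{d-3}$.
\item Add two further classes: sets of the form $\{3,4,5,6\}\cup K$ or $\{\cdot\}\cup K$ with complementary patterns on $\{3,\dots,6\}$, where $K$ ranges over a $(d-4)$-uniform family on $[n]\setminus[6]$ with VC-dimension at most $d-4$. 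This contributes $2M_{d-4}(n-6)$.
\end{itemize}
The sets removed from $\cG_1$ (respectively $\cG_2$) are chosen so that, for each $G\in I$, the missing trace is a pattern on $G\cap\{3,4,5,6\}$. For the recursive classes the missing trace is instead supplied by the VC-dimension bound on $K$: because $K$ is not shattered, some subset of $K$ is not a trace, and this lifts to a missing trace of $G$.

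\emph{Step 3 ($d=3$).} Here $M_{-1}$ is meaningless, so I would do a direct ad hoc choice on $\{3,\dots,6\}$ and the remaining points. The goal is to gain $n-3-\binom{n-4}{1}=1$ over AK, for example by adding one extra set such as $\{4,5,6\}$ to $I$ while deleting a single compensating pattern. Everything is then checked by the criterion of Step 1.

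\emph{Main obstacle.} The delicate point is Step 2. We need exclusion patterns such that each $G\in I$ misses a trace in $\cG_1$ and also one in $\cG_2$. At the same time, $\cG_1\cup\cG_2$ must still cover all of $\binom{V}{d}$, or the losses must be compensated. The hardest part is showing that the two recursive copies of a $(d-4)$-family of VC-dimension $\le d-4$ make the missing trace available in both $\cG_1$ and $\cG_2$ without interfering with the other classes. I would first try to make $\cG_1$ and $\cG_2$ exact complements on the non-$I$ part, with $\cG_1$ responsible for sets whose pattern on $\{3,\dots,6\}$ has even size and $\cG_2$ for odd size. I would then check the trace conditions pattern by pattern.
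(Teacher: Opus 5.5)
Your Step 1 reduction is sound. You only need the ``if'' direction, and it holds for every $n$; the ``only if'' would actually need $n\ge 2d+1$. But the whole proof lives in Step 2, which you never carry out, and the concrete choices you float fail.

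\emph{The suggested choices fail.} The even/odd split breaks at once on $G=34\cup B\in I$. There $\cG_1$ contains the classes $\emptyset$, $35$, $45$ and $34$, so for large $n$ it realizes every trace on $G$, and $\{2\}\cup G$ is shattered. Symmetrically, $\cG_2$ does the same via the classes $5$, $3$, $4$. In Step 3, the AK family $\cG_1$ already meets $456$ in every proper subset (in $4$ via $34x$, in $56$ via $56x$, and so on). So putting $456$ into $I$ shatters $2456$, and a compensating deletion would cancel the gain of $1$ anyway.

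\emph{The first two bullets can be realized.} This needs an actual certificate choice. Sort the $d$-sets $G\subseteq V$ by $G\cap 3456$. Give $\cG_1$ the classes $\emptyset,3,6,34,35,36,56,345,346,356,456,3456$ and $\cG_2$ the classes $4,5,34,45,46,345,346,356,3456$. A set $S'\cup B'$ can have trace $T\cup B$ on $S\cup B$ only if $S'\cap S=T$ and $|S'|\le|S|$. Hence $\cG_1$ misses $T\cup B$ for $(S,T)=(34,4),(345,4),(346,4),(356,5),(3456,4)$. Likewise $\cG_2$ misses it for $(S,T)=(34,3),(345,3),(346,3),(356,56),(3456,\emptyset)$. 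The class counts by size $0,\dots,4$ (over both families) are $(1,4,7,7,2)$. This gives $\binom{n-1}{d}+\binom{n-4}{d-2}+\binom{n-6}{d-3}$, i.e.\ (i) and the first three terms of (ii).

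\emph{The genuine gap is the recursive term $2M_{d-4}(n-6)$.} As written it is empty. The set $3456\cup K$ contains $34$, so it already lies in the AK part of $I$. Also, every $(d-4)$-uniform family has VC-dimension at most $d-4$, whereas $M_{d-4}(n-6)$ concerns $(d-3)$-uniform families.

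More seriously, on the core $[6]$ your framework has no room for two recursive classes:
\begin{itemize}
\item Edges must meet $\{1,2\}$, so the $4$-set $3456$ is unusable.
\item The six $4$-subsets through $\{1,2\}$ are full by fiat.
\item Realizing your $I$ with $\cG_1\cup\cG_2=\binom{V}{d}$ makes seven of the eight classes $1\cup S$, $2\cup S$ ($S$ a $3$-subset of $3456$) full, since $345,346,356$ lie in both families and $456$ in one.
\end{itemize}
Only $1456$ or $2456$ is left, and it cannot host a recursion. In each realization of your $I$, whichever family contains the class $456$ already realizes every trace on $456\cup H$. In the example above this happens via the $\cG_1$-classes $\emptyset,34,35,36,345,346,56,456$. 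So the new recursive edges would be shattered.

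The paper escapes exactly by leaving your framework. Its $\cB$ contains $456$ and $3456$, which avoid $\{1,2\}$. The class $1245$ through $\{1,2\}$ is filled only along $\cH_1$. Thus all fifteen $4$-subsets of $[6]$ are used, with $1245$ and $2356$ recursive. The special edges $1345\cup B$ and $3456\cup B$ stay safe because their certificates $35\cup B$ and $45\cup B$ can be hit only by $R_2$-edges, resp.\ $R_1$-edges. Those certificates are then repaired by the non-shattering of $\cH_2$, resp.\ $\cH_1$. So part (ii) needs an idea your proposal does not supply, and on the core $[6]$ your reduction cannot supply it.
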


Thus, \cref{thm:main} improves the Ahlswede--Khachatrian bound by
by at least
\(\binom{n-6}{d-3}\).  
The exact result for \(d=2\) can also be used as a seed in the recursive term. For
example, for \(n\ge 13\), we have \(M_6(n)\ge \AK_6(n)+\binom{n-6}{3}+2\binom{n-7}{2}+2\).

We find the phenomenon behind this improvement quite striking.
For \(d=2\), the Ahlswede--Khachatrian construction is sharp because it leaves no lower dimensional room to fill. 
In higher dimensions, however, missing patterns can be filled
recursively. 
This suggests that the Erd\H{o}s--Frankl--Pach problem may not 
admit a uniform answer across all dimensions: the correct formula may genuinely
change with \(d\).

\section{Proof of Theorem~\ref{thm:main}(ii)}
In this section, we prove \cref{thm:main}(ii). 
The same core construction, without the recursive part, yields the bound $M_3(n)\ge \binom{n-1}{3}+n-3$ in Theorem~\ref{thm:main}(i); we give the details in Appendix~\ref{appendix}. 
Throughout, if
\(P=\{i_1,\ldots,i_t\}\), we write \(i_1\cdots i_t\) for \(P\).  For a family
\(\cF\) and a set \(S\), write
\[
        \Tr_{\cF}(S)=\{F\cap S:F\in\cF\}.
\]

We shall use the following elementary criterion.

\begin{fact}\label{lem:missing-trace}
Let \(\cF\subseteq \binom{[n]}{d+1}\).  Suppose that for every
\(F\in\cF\), there is a set \(T_F\subseteq F\) such that
\(
        T_F\notin \Tr_{\cF}(F).
\)
Then \(\VC(\cF)\le d\).
\end{fact}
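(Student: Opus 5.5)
The plan is to argue by contradiction: assume some set \(S\) with \(|S|\ge d+1\) is shattered by \(\cF\), and derive a contradiction with the hypothesis on the sets \(T_F\).

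First I will reduce to the case \(|S|=d+1\). Any subset of a shattered set is again shattered, since restricting each trace \(F\cap S\) to a subset \(S'\subseteq S\) still produces every subset of \(S'\). So if \(\VC(\cF)\ge d+1\), there is a shattered set \(S\) with exactly \(d+1\) elements.

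Next I will use the full pattern \(S\) itself. Because \(S\) is shattered, some \(F\in\cF\) satisfies \(F\cap S=S\), that is, \(S\subseteq F\). Both sets have size \(d+1\), and \(\cF\) is \((d+1)\)-uniform, so \(F=S\).

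Shattering then says \(\Tr_{\cF}(S)=2^S\). Hence every subset of \(F=S\), in particular \(T_F\), lies in \(\Tr_{\cF}(F)\). This contradicts the assumption \(T_F\notin\Tr_{\cF}(F)\), and so \(\VC(\cF)\le d\).

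No step is a real obstacle. The only point needing care is the identification \(F=S\), and uniformity gives it immediately.
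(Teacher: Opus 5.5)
Your proof is correct. The paper states this fact without proof as an elementary criterion, and your argument is the natural justification: a shattered $(d+1)$-set $S$ must be contained in the member $F\in\cF$ that realizes the full trace, hence by uniformity equals it, so $\Tr_{\cF}(F)=2^F\ni T_F$. Your reduction to $|S|=d+1$ via heredity of shattering is also valid; in fact uniformity already forces every shattered set to have size at most $d+1$.
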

Let \(C=[6]=\{1,2,3,4,5,6\}\) and \(U=[n]\setminus C\).
Define \(\cB=\bigsqcup_{i=1}^{6}\mathcal{B}_{i}\subseteq 2^C\) by
\[
\begin{aligned}
\cB_1={}&\{1\},\\
\cB_2={}&\{12,15,23,24,26\},\\
\cB_3={}&\{123,124,125,126,135,156,234,236,246,256,456\},\\
\cB_4={}&\{1234,1235,1236,1246,1256,1345,1346,1356,1456,2345,2346,2456,3456\},\\
\cB_5={}&\{12345,12346,12356,12456,13456,23456\},\\
\cB_6={}&\{123456\}.
\end{aligned}
\]
The level sizes are
\(
        (|\cB_1|,\ldots,|\cB_6|)
        =(1,5,11,13,6,1).
\)
Let the two missing \(4\)-patterns be \(R_1=1245\) and \(R_2=2356\).  Both
\(R_1\) and \(R_2\) are absent from \(\cB\).  Choose two families
\(\cH_1,\cH_2\subseteq\binom{U}{d-3}\) with
\(|\cH_1|=|\cH_2|=M_{d-4}(n-6)\) and \(\VC(\cH_i)\le d-4\) for
\(i=1,2\). Note that when $d=4$, we have \(\cH_1=\{x_1\}\) and \(\cH_2=\{x_2\}\) for some \(x_1,x_2\in U\).
Define
\begin{equation*}
\cF_6=\bigg\{P\cup B:P\in\cB,\ B\in\binom{U}{d+1-|P|}\bigg\}\cup \{R_1\cup H:H\in\cH_1\}\cup \{R_2\cup H:H\in\cH_2\}.
\end{equation*}
We call these three subfamilies the \(\cB\)-part, the
\(R_1\)-recursive part, and the \(R_2\)-recursive part, respectively.
The three parts are pairwise disjoint, since the three possible core patterns
in \(C\) are different and \(R_1,R_2\notin\cB\).  Hence, with \(N=n-6\),
\[
\begin{aligned}
|\cF_6|
&=\sum_{s=1}^6 |\cB_s|\binom{N}{d+1-s}+2M_{d-4}(n-6)\\
&=\binom{N}{d}
+5\binom{N}{d-1}
+11\binom{N}{d-2}
+13\binom{N}{d-3}
+6\binom{N}{d-4}
+\binom{N}{d-5}
+2M_{d-4}(n-6)\\
&=\left(\sum_{j=0}^5\binom{5}{j}\binom{N}{d-j}\right)
+\left(\binom{N}{d-2}+2\binom{N}{d-3}+\binom{N}{d-4}\right)
+\binom{N}{d-3}
+2M_{d-4}(n-6)\\
&=\binom{N+5}{d}+\binom{N+2}{d-2}+\binom{N}{d-3}
+2M_{d-4}(n-6)\\
&=\binom{n-1}{d}+\binom{n-4}{d-2}+\binom{n-6}{d-3}
+2M_{d-4}(n-6).
\end{aligned}
\]
It then suffices to show the following property on \(\cF_6.\)
\begin{prop}\label{prop:VCD}
   \(\VC(\cF_6)\le d\). 
\end{prop}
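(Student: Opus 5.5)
We will verify the hypothesis of Fact~\ref{lem:missing-trace}: for every \(F\in\cF_6\) we exhibit \(T_F\subseteq F\) with \(T_F\notin\Tr_{\cF_6}(F)\). Since a shattered set of size \(d+1\) in a \((d+1)\)-uniform family must be a member \(F\) with all of \(2^F\) realised as traces, this suffices. Write \(F=P\cup B\) with \(P=F\cap C\) and \(B=F\cap U\). The key observation is that a trace \(F'\cap F\) equal to a prescribed \(T\) with \(T\cap U=B\) forces \(F'\supseteq B\). Then \(|F'\cap C|\le |P|\) by uniformity, and \(F'\cap C\) is a core pattern \(Q\) with \(Q\cap P=T\cap C\). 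So we choose \(T_F=P'\cup B\) for a suitable \(P'\subseteq P\). Such a set is missing from \(\Tr_{\cF_6}(F)\) exactly when no admissible core pattern \(Q\) (an element of \(\cB\), or \(R_1\), \(R_2\) with the right \(U\)-part) satisfies \(Q\cap P=P'\), \(|Q|\le |P|\), and has a completion containing \(B\) and avoiding \(F\setminus(P'\cup B)\).

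\textbf{\(\cB\)-part.} For \(F=P\cup B\) with \(P\in\cB\), any \(Q\in\cB\) with \(|Q|\le|P|\) can be completed. The \(U\)-part has size \(d+1-|Q|\ge|B|\), and when this is strictly larger we use the extra elements of \(U\setminus B\), which exist because \(n\ge 2d\); this needs a short count. The recursive patterns \(R_i\) can be used only if \(B\) is contained in some \(H\in\cH_i\) of size \(d-3\). So it is enough to find \(P'\subseteq P\) that is not of the form \(Q\cap P\) with \(Q\in\cB\cup\{R_1,R_2\}\), \(|Q|\le|P|\). This yields a purely finite statement: for every \(P\in\cB\), the set \(\{Q\cap P: Q\in \cB\cup\{R_1,R_2\},\ |Q|\le |P|\}\) is a proper subset of \(2^P\). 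We would check it by going through the 37 patterns. For \(|P|\le 2\) this is immediate: for \(P=\{1\}\) the empty set is missing since \(\{1\}\) is the only singleton and no \(Q=\emptyset\) exists. For larger \(P\), the check amounts to the design of \(\cB\) as a downward-missing configuration; \(R_1,R_2\) are of size \(4\) and only matter when \(|P|\ge4\). We would organise the check by listing, for each \(P\), one explicit missing \(P'\), for instance \(P'=\emptyset\) or a small subset of \(P\) not covered by \(\cB\) within size \(|P|\).

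\textbf{Recursive parts.} For \(F=R_1\cup H\) with \(H\in\cH_1\), we use the recursion. Since \(\VC(\cH_1)\le d-4\) and \(|H|=d-3\), Fact~\ref{lem:missing-trace} (or directly the non-shattering of \(H\)) gives \(S\subseteq H\) with \(S\notin\Tr_{\cH_1}(H)\). We take \(T_F=R_1\cup S\). A set \(F'\) realising it must contain \(R_1\), so \(|F'\cap C|\ge4\). If \(F'\cap C=R_1\), then \(F'\in\) the \(R_1\)-part and \(F'\cap H=S\) contradicts the choice of \(S\). Otherwise \(F'\cap C\) is a pattern \(Q\supsetneq R_1\) in \(\cB\), of size \(5\) or \(6\). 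Here a different choice is needed, since \(B\)-part sets with \(Q\in\{12345,12456,123456\}\) have enough free \(U\)-elements to realise \(R_1\cup S\). So instead we use the full trace structure: choose \(T_F=P'\cup S\) with \(P'\subsetneq R_1\), where \(P'\) is not of the form \(Q\cap R_1\) for \(Q\in\cB\cup\{R_2\}\) with \(|Q|\le 4\). The size bound \(|Q|\le4\) comes from containing \(S\) plus the \(U\)-size constraint \(|F'\cap U|\le d+1-|Q|\). Dealing with the case \(|S|\) small, where this constraint is weak, is delicate.

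The main obstacle is this recursive case. Whether a \(\cB\)-pattern of larger size can realise the trace depends on \(|S|\) versus \(d+1-|Q|\), and the correct \(T_F\) must mix a missing core pattern with the missing recursive trace \(S\). We would first try \(T_F=P'\cup H\) with the whole of \(H\). This forces \(|F'\cap C|\le 4\), reducing to the finite check that some \(P'\subseteq R_1\) is not among the \(Q\cap R_1\) with \(|Q|\le4\), \(Q\ne R_1\). If no such \(P'\) exists, we would fall back on \(P'=R_1\) combined with \(S\), using \(\VC(\cH_1)\le d-4\). The same argument applies to \(R_2\) by the symmetry of the check.
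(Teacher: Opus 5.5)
There is a genuine gap in the \(\cB\)-part. Your reduction rests on a finite claim: for every \(P\in\cB\), some \(P'\subseteq P\) is not of the form \(Q\cap P\) with \(Q\in\cB\cup\{R_1,R_2\}\) and \(|Q|\le|P|\). This claim is false for \(P=1345\) and \(P=3456\).

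\begin{itemize}
\item For \(1345\), the members of \(\cB\) of size at most \(4\) meet it in every subset except \(35\) (for instance \(26\mapsto\emptyset\), \(256\mapsto5\), \(1456\mapsto145\)), and \(R_2\cap1345=35\) fills that hole.
\item For \(3456\), the only subset missed by \(\cB\) is \(45\), and \(R_1\cap3456=45\) fills it.
\end{itemize}

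This is not just a weakness of your sufficient condition. Take \(F=1345\cup B\) with \(B\in\cH_2\). Then \(R_2\cup B\in\cF_6\) realises \(35\cup B\). Once \(n\ge d+5\), every other \(P'\cup B\) is realised by a \(\cB\)-edge containing \(B\). So no test set of the form \(P'\cup B\) can work for such \(F\), and the same holds for \(3456\cup B\) with \(B\in\cH_1\).

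The missing idea is that the recursion must be used at these two exceptional \(\cB\)-patterns, not at the recursive edges. No \(Q\in\cB\) of \emph{any} size satisfies \(Q\cap1345=35\), and \(R_1\cap1345=145\). Hence the only edges of \(\cF_6\) with core trace \(35\) on \(F\) are the sets \(R_2\cup H\) with \(H\in\cH_2\). This means you may drop the requirement that the test set contain all of \(B\):
\begin{itemize}
\item if \(B\notin\cH_2\), take \(T_F=35\cup B\);
\item if \(B\in\cH_2\), use \(\VC(\cH_2)\le d-4\) to choose \(Y\subsetneq B\) with \(Y\notin\Tr_{\cH_2}(B)\), and take \(T_F=35\cup Y\).
\end{itemize}
Do the same for \(3456\), using \(45\) and \(\cH_1\).

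For the recursive edges no recursion is needed. Your first attempt \(T_F=P'\cup H\) already succeeds with \(P'=4\) for \(R_1\) and \(P'=3\) for \(R_2\). In fact the bare traces \(4\) and \(3\) are missing, because no pattern in \(\cB\cup\{R_1,R_2\}\) of any size meets \(R_1\) in \(4\) or \(R_2\) in \(3\). Your proposed fallback \(R_1\cup S\) cannot serve as a fallback: it is exactly the set you showed is realised by the \(12345\)-, \(12456\)- and \(123456\)-edges.
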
 
\begin{proof}[Proof of Proposition~\ref{prop:VCD}]
For every
\(P\in\cB\setminus\{1345,3456\}\), define
\(\sigma(P)\subsetneq P\) by the following table:
\[
\begin{array}{c|c@{\hspace{1.5em}}c|c@{\hspace{1.5em}}c|c}
P&\sigma(P)&P&\sigma(P)&P&\sigma(P)\\
\hline
1&\emptyset & 12&\emptyset & 15&5\\
23&3 & 24&4 & 26&6\\
123&3 & 124&\emptyset & 125&\emptyset\\
126&\emptyset & 135&35 & 156&5\\
234&34 & 236&36 & 246&4\\
256&6 & 456&45 & 1234&\emptyset\\
1235&\emptyset & 1236&\emptyset & 1246&\emptyset\\
1256&\emptyset & 1346&\emptyset & 1356&5\\
1456&5 & 2345&3 & 2346&4\\
2456&4 & 12345&\emptyset & 12346&\emptyset\\
12356&\emptyset & 12456&\emptyset & 13456&\emptyset\\
23456&3 & 123456&\emptyset &&
\end{array}
\]
It is not hard to check that for every row of the table, there
is no \(Q\in\cB\cup\{R_1,R_2\}\) such that \(|Q|\le |P|\) and
\begin{equation}\label{eq:six-certificate}
       Q\cap P=\sigma(P). 
\end{equation}
Let \(F=P\cup B\in\cF_6\), where \(P\in\cB\setminus\{1345,3456\}\). 

\begin{claim}
  \(B\cup\sigma(P)\) is missing from \(\Tr_{\cF_6}(F)\).  
\end{claim}
\begin{poc}
If an
edge from the \(\cB\)-part realizes this trace, then its core
\(Q\in\cB\) must satisfy \(|Q|\le |P|\) and \(Q\cap P=\sigma(P)\),
contradicting \eqref{eq:six-certificate}.  If an edge from the \(R_i\)-part
realizes the trace, then \(B\) must be contained in its outside part, so
\(|R_i|\le |P|\), and the core condition gives \(R_i\cap P=\sigma(P)\), again
contradicting \eqref{eq:six-certificate}. This proves the claim.
\end{poc}

We next handle the two exceptional patterns.  First take
\(F=1345\cup B\), where \(B\in\binom{U}{d-3}\). Notice that
there is no \(Q\in\cB\) such that
\begin{equation}\label{eq:six-special-1345}
     Q\cap 1345=35,
\end{equation}
and that \(R_1\cap 1345=145\), while \(R_2\cap 1345=35\).
If \(B\notin\cH_2\), then \(B\cup 35\) is missing.  The \(\cB\)-part is ruled
out by \eqref{eq:six-special-1345}; the \(R_1\)-recursive part has the wrong
core trace \(145\); and an \(R_2\)-recursive edge would realize \(B\cup35\)
only if its outside part equals \(B\), contrary to \(B\notin\cH_2\).

If \(B\in\cH_2\), then \(B\) is not shattered by \(\cH_2\), because
\(\VC(\cH_2)\le d-4\).  Since \(B\) itself is realized, choose
\(Y\subsetneq B\) such that
\(
        Y\notin \Tr_{\cH_2}(B).
\)
Then \(35\cup Y\) is missing from \(\Tr_{\cF_6}(F)\): the \(\cB\)-part is
ruled out by \eqref{eq:six-special-1345}, the \(R_1\)-part has the wrong core
trace, and the \(R_2\)-part would require some \(H\in\cH_2\) with
\(H\cap B=Y\), which does not exist.

Second take \(F=3456\cup B\), where \(B\in\binom{U}{d-3}\).  Here the
facts are that there is no \(Q\in\cB\) such that
\begin{equation}\label{eq:six-special-3456}
      Q\cap 3456=45,
\end{equation}
and that \(R_1\cap 3456=45\), while \(R_2\cap 3456=356\).
If \(B\notin\cH_1\), then \(B\cup45\) is missing.  If \(B\in\cH_1\), choose
\(Y\subsetneq B\) such that
\(
        Y\notin \Tr_{\cH_1}(B),
\)
which exists because \(\VC(\cH_1)\le d-4\).  Then \(45\cup Y\) is missing.
The verification is exactly parallel to the previous paragraph, using
\eqref{eq:six-special-3456} and the fact that only the \(R_1\)-recursive part
has core trace \(45\) on \(3456\).

It remains to consider recursive edges.  If \(F=R_1\cup H=1245\cup H\), where
\(H\in\cH_1\), then the trace \(4\) is missing.  Indeed, there is no
\(Q\in\cB\) such that
\(
        Q\cap R_1=4,
\)
the \(R_1\)-recursive edges all contain the whole core \(R_1\), and
\(R_2\cap R_1=25\ne 4\).
Thus no edge of \(\cF_6\) has trace \(4\) on \(F\).

Similarly, if \(F=R_2\cup H=2356\cup H\), where \(H\in\cH_2\), then the trace
\(3\) is missing.  To see this, there is no \(Q\in\cB\) such that
\(
        Q\cap R_2=3,
\)
the \(R_2\)-recursive edges all contain the whole core \(R_2\), and
\(R_1\cap R_2=25\ne 3\).
Therefore every edge of \(\cF_6\) has a missing trace.  By Fact~\ref{lem:missing-trace},
\(
        \VC(\cF_6)\le d.
\)
\end{proof}
Consequently,
\[
        M_d(n)\ge \binom{n-1}{d}+\binom{n-4}{d-2}+\binom{n-6}{d-3}+2M_{d-4}(n-6),
\]
which proves \cref{thm:main}.

\section{Some remarks}
Our constructions should not be viewed as the end of the story. The underlying idea is to use a carefully chosen 6-point core with two missing \(4\)-patterns that can
be filled recursively. We believe that if one replaces this 6-point core by a larger finite core, it might be possible
to further improve the coefficients of the
lower order error terms in our main result.  This is a finite problem in
principle, but it becomes complicated very quickly: a missing pattern that is
usable at one level might be destroyed by traces coming from higher levels, and
the certificate table has to be checked simultaneously with all recursive
insertions.

Thus, the present construction points to a sharp increase in the complexity of
the Erd\H{o}s--Frankl--Pach problem once \(d\ge3\). Accordingly, we should 
not expect the exact values in 
higher dimensions to 
be governed by the same rigidity phenomenon  as in the case \(d=2\).  The first natural test case is
\(d=3\).  
Part (i) of Theorem \ref{thm:main} gives
\[
        M_3(n)\ge \binom{n-1}{3}+n-3,
\]
for every \(n\ge 6\), and we suspect that this 
bound is sharp for all sufficiently large \(n\).
The restriction to sufficiently large \(n\) is necessary: it was shown in~\cite{2025ThreeUniform} that \(M_{3}(8)\ge 45,\) whereas \(\binom{7}{3}+5=40\).

\begin{conj}\label{conj:d=3}
For all sufficiently large \(n\),
\(
        M_3(n)=\binom{n-1}{3}+n-3.
\)
\end{conj}

\section*{Acknowledgement}
Zixiang Xu is grateful to Prof. Tuan Tran for inviting him to attending the Conference on Graph Theory at USTC in June 2026 and for his hospitality during the visit.
Before submitting the draft, the authors were informed by Prof. Gennian Ge and Xiaochen Zhao that they also independently obtained a new lower bound which improves the result of Ahlswede and Khachatrian~\cite{1997CombFan}.

\bibliographystyle{abbrv}
\bibliography{FranklPach}
\appendix


\section{The $4$-uniform construction}\label{appendix}
In this appendix, we prove the $4$-uniform case of Theorem \ref{thm:main}.




\begin{proof}[Proof of Theorem~\ref{thm:main}(i)]
We retain the notation \(C=[6]\), \(U=[n]\setminus C\), and \(\cB\) from the proof of Theorem \ref{thm:main}(ii). Define
$\cF_3=\big\{P\cup A:P\in\cB,\ A\in\binom{U}{4-|P|}\big\}$.
Then
\[
|\cF_3|=\sum_{s=1}^4|\cB_s|\binom{n-6}{4-s}
=\binom{n-1}{3}+\binom{n-4}{1}+1.
\]
It remains to show that
\(\VC(\cF_3)\le3\).
Suppose first that
\(P\notin\{1345,3456\}\), and let \(\sigma(P)\subsetneq P\) be the
certificate from the table in the proof of
Proposition~\ref{prop:VCD}. We claim that \(A\cup\sigma(P)\) is missing
from \(\Tr_{\cF_3}(F)\). Indeed, if \(Q\cup A'\in\cF_3\) realized this
trace, then \(A\subseteq A'\), and hence
\(4-|P|=|A|\le |A'|=4-|Q|\), so \(|Q|\le|P|\). Comparing the
intersections with \(C\) gives \(Q\cap P=\sigma(P)\), contradicting
the certificate property.

If \(P=1345\), then \(A=\emptyset\), and the trace \(35\) is missing
because no \(Q\in\cB\) satisfies \(Q\cap1345=35\). Similarly, if
\(P=3456\), then the trace \(45\) is missing. Thus every member of
\(\cF_3\) has a missing trace, and Fact~\ref{lem:missing-trace} yields
\(\VC(\cF_3)\le3\).
\end{proof}

\end{document}